\DeclarePairedDelimiter\ceil{\lceil}{\rceil}
\DeclarePairedDelimiter\floor{\lfloor}{\rfloor}
\newtheorem{thm}{Theorem}[section]
\newtheorem{lem}[thm]{Lemma}
\newtheorem{prop}[thm]{Proposition}
\numberwithin{equation}{section}
\title[A 2-spine decomposition and Yaglom's theorem]
{\large A 2-spine Decomposition of the Critical Galton-Watson Tree and a Probabilistic Proof of Yaglom's Theorem}
\author{Yan-Xia Ren, Renming Song and Zhenyao Sun}
\address{
	Yan-Xia Ren\\
	School of Mathematical Sciences \& Center for Statistical Science\\
	Peking University, Beijing\\
	P. R. China, 100871}
\email{yxren@math.pku.edu.cn}
\thanks{The research of Yan-Xia Ren is supported in part by NSFC (Grant Nos. 11671017 and 11731009)}
\address{
	Renming Song\\
	Dept of Mathematics\\
	University of Illinois at Urbana-Champaign\\
	Urbana, IL 61801}
\email{rsong@illinois.edu}
\thanks{The research of Renming Song is supported in part by the Simons Foundation (\#429343, Renming Song)}
\address{
	Zhenyao Sun\\
	School of Mathematical Sciences\\
	Peking University\\
	Beijing, P. R. China, 100871}
\email{zhenyao.sun@pku.edu.cn}
\thanks{Zhenyao Sun is supported by the China Scholarship Council. Corresponding author.}
\keywords{
	Galton-Watson process, Galton-Watson tree, spine decomposition, Yaglom's theorem, martingale change of measure}
\subjclass[2010]{60J80, 60F05}
\begin{document}
\begin{abstract}
	In this note  we propose a two-spine decomposition of the critical Galton-Watson tree and use this decomposition to give a probabilistic proof of Yaglom's theorem.
\end{abstract}
\maketitle	
\section{Introduction}
\subsection{Model}
\label{sec:model}
	Consider a critical Galton-Watson process
	$(Z_n)_{n\ge 0}$ 	with $Z_0 = 1$
	and offspring distribution $\mu$ on $\mathbb N_0 : = \{0,1,\dots\}$ which has mean $1$ and finite variance $\sigma^2>0$, i.e.,
\begin{equation}\label{eq:mean}
 \sum_{k=0}^\infty k \mu(k)	=1
\end{equation}
	and
\begin{equation}\label{eq:variance}
	0	
	<	\sigma^2
	:=	\sum_{k=0}^\infty  (k-1)^2 \mu(k)
	=	\sum_{k=0}^\infty k(k-1) \mu(k)
	<	\infty.
\end{equation}
	For simplicity,  
	we will refer to $(Z_n)_{n\geq 0}$ as a  \emph{$\mu$-Galton-Watson process}.
	It is well known that
\begin{thm}[\cite{kesten1966galton}] \label{thm: Kolmogrov and Yaglom theorem}
	For a $\mu$-Galton-Watson process $(Z_n)_{n\geq 0}$
	satisfying \eqref{eq:mean} and \eqref{eq:variance}, we have
\begin{enumerate}
\item \label{thm:kolmogorov}
	 $n P (Z_n>0) \xrightarrow[n \to \infty]{} 2/\sigma^2;$
\item \label{thm:yaglom}
	$\{n^{-1}Z_n; P(\cdot | Z_n>0)\}\xrightarrow[n \to \infty]{d} Y,$
\end{enumerate}
	where $Y$ is an exponential random variable with mean $\sigma^2/2$.
\end{thm}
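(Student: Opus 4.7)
The plan is to prove both parts of Theorem~\ref{thm: Kolmogrov and Yaglom theorem} in tandem via the 2-spine decomposition advertised in the title. First, I would introduce an auxiliary probability measure $\widetilde P_n$ on the space of Galton--Watson trees truncated at generation $n$, defined by
\[
\frac{d\widetilde P_n}{dP} = \frac{Z_n(Z_n-1)}{E[Z_n(Z_n-1)]} = \frac{Z_n(Z_n-1)}{n\sigma^2},
\]
where $E[Z_n(Z_n-1)] = n\sigma^2$ follows from \eqref{eq:variance} and the branching recursion. The structural content of the decomposition is that, under $\widetilde P_n$, one may generate the tree by selecting two ordered distinguished particles in generation $n$ and tracing their ancestral lineages: these coincide up to a random split generation $T\in\{0,1,\ldots,n-1\}$ and separate thereafter. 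Along the shared spine the offspring law is $\mu$ biased by $k(k-1)/\sigma^2$ with two designated spine children; at the split, the two selected children become the roots of the two separated spines; along each of the two post-split spines the offspring law is the classical size-biased law $\widetilde\mu(k)=k\mu(k)$; and every off-spine child initiates an independent ordinary $\mu$-Galton--Watson ``bush''.

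Second, I would use this structure to analyze the Laplace transform of $Z_n/n$. For $\lambda\ge 0$,
\[
E\bigl[Z_n(Z_n-1)\,e^{-\lambda Z_n/n}\bigr] = n\sigma^2\,\widetilde E_n\bigl[e^{-\lambda Z_n/n}\bigr],
\]
and under $\widetilde P_n$ the random variable $Z_n$ decomposes as $2$ (the two spine tips) plus the generation-$n$ populations of the independent bushes hanging off each spine generation. A direct counting argument, using $\sum_{k=0}^{n-1}\sigma^2 = n\sigma^2$, shows that the marginal law of $T$ under $\widetilde P_n$ is uniform on $\{0,\ldots,n-1\}$, so $T/n\Rightarrow\mathrm{Unif}[0,1]$. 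A bush born at relative height $u\in[0,1]$, conditioned on being non-empty at generation $n$, should contribute to $Z_n/n$ a mass asymptotically equal to $(1-u)$ times an independent copy of the conjectured Yaglom limit $Y$. Combining these ingredients and passing to the limit yields a self-consistency equation for $\phi(\lambda) := \lim_{n\to\infty} E[e^{-\lambda Z_n/n}\mid Z_n>0]$, which together with the elementary identity $1=E[Z_n]=P(Z_n>0)\,E[Z_n\mid Z_n>0]$ simultaneously determines $\phi(\lambda) = (1+\sigma^2\lambda/2)^{-1}$ and $nP(Z_n>0)\to 2/\sigma^2$.

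The main obstacle I anticipate is the careful construction of $\widetilde P_n$ as a coherent probability measure on decorated trees and the verification that the generation-by-generation recipe integrates to the global Radon--Nikodym factor $Z_n(Z_n-1)/(n\sigma^2)$; the correct bookkeeping of the biasing at the shared spine, at the split, and along the separated spines is delicate. Beyond this, the remaining technical point is justifying the limit passage in the Laplace functional, which is mildly circular: the bush contributions need Kolmogorov-type estimates that must either be established in advance via the second moment and Paley--Zygmund or extracted iteratively from the fixed-point equation itself. Once these points are settled, identifying the limit as an exponential distribution with mean $\sigma^2/2$ is algebraically routine.
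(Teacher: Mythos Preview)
Your setup of the $Z_n(Z_n-1)$ change of measure and the two-spine picture matches the paper's Theorem~1.2 and the construction in Section~2.1, and your observation that the split generation is uniform on $\{0,\dots,n-1\}$ is correct. However, the execution diverges from the paper in two substantial ways, and the second of these is the gap you yourself flag but do not close.

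First, the paper does \emph{not} prove part~(1) via the two-spine decomposition. Kolmogorov's estimate is established separately by a first-generation conditioning argument tracking the left-most surviving lineage (a simplification of Geiger and Vatutin--Dyakonova), and is then used as input for part~(2). Your plan to obtain both parts ``in tandem'' from a single self-consistency equation is not what the paper does.

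Second, and more importantly, the paper avoids your bush-conditioned-on-survival analysis entirely. Rather than studying the conditional law of $Z_n/n$ and treating individual bushes, it passes to the size-biased process $\dot Z_n$ and the $k(k-1)$-biased process $\ddot Z_n^{(n)}$, and groups \emph{all} the off-spine bushes along each spine into a single size-biased subtree. Proposition~2.1 then yields the exact identity
\[
E\bigl[e^{-\lambda \ddot Z_n^{(n)}}\bigr] \;=\; E\bigl[e^{-\lambda \dot Z_n}\bigr]\,E\bigl[g(\lambda,\lfloor Un\rfloor)\,e^{-\lambda \dot Z_{\lfloor Un\rfloor}}\bigr],\qquad g\to 1.
\]
The limit is identified not by a fixed-point or tightness argument but by a direct comparison lemma (Lemma~3.1) together with a Gronwall-type inequality (Lemma~3.2), which force $\limsup_n\bigl|E[e^{-\lambda\dot Z_n/n}]-E[e^{-\lambda\dot Y}]\bigr|\equiv 0$. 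This contraction device is precisely the paper's mechanism for breaking the circularity: it never requires pre-existing tightness of $Z_n/n$ under $P(\,\cdot\mid Z_n>0)$, nor the constant in Kolmogorov's estimate, at any intermediate step of the two-spine analysis.

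In your proposal the circularity remains unresolved: conditioning bushes on survival needs $nP(Z_n>0)\to 2/\sigma^2$ (Paley--Zygmund gives only the order, not the constant), and ``extracting iteratively from the fixed-point equation'' presupposes tightness you have not established. The paper's solution is to decouple the two parts and to work throughout at the $\dot Z_n$ level so that no conditioning on survival ever enters the two-spine computation.
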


	Under a third moment assumption, assertions \eqref{thm:kolmogorov} and \eqref{thm:yaglom} of Theorem \ref{thm: Kolmogrov and Yaglom theorem} are due to \cite{kolmogorov1938losung} and \cite{yaglom1947certain} respectively.
	Theorem \ref{thm: Kolmogrov and Yaglom theorem}(2) is usually called Yaglom's theorem.
	For probabilistic proofs of the above results, we refer our readers to
\cite{geiger1999elementary}, \cite{geiger2000new} and \cite{lyons1995conceptual}.

	In \cite{lyons1995conceptual}, Lyons, Pemantle and Peres gave a probabilistic proof of Theorem \ref{thm: Kolmogrov and Yaglom theorem} using the so-called size-biased $\mu$-Galton-Watson tree.
	In this note, by \emph{size-biased transform} we mean the following:
	Let $X$ be a random variable
	and $g(X)$ be a Borel function of $X$ with $P(g(X) \geq 0) = 1$ and $E[g(X)]\in (0,\infty)$.
	We say a random variable $W$ is
	a $g(X)$-size-biased transform (or simply $g(X)$-transform) of $X$ if
\[
	E[f( W )] 
	= \frac{ E[g(X)f(X)]}{E[g(X)]}
\]
	for each positive Borel function $f$.
	An $X$-transform of $X$ is sometimes called a size-biased transform of $X$.

	We now recall the size-biased $\mu$-Galton-Watson tree introduced in \cite{lyons1995conceptual}.
	Let $L$ be a random variable with distribution $\mu$.
   Denote by $\dot L$ an \emph{$L$-transform} of $L$.
	The celebrated \emph{size-biased $\mu$-Galton-Watson tree} is then constructed as follows:
\begin{itemize}
\item
	There is an initial particle which is marked.
\item
	Any marked particle gives independent birth to a random number of children according to $\dot L$. Pick one of those children randomly as the new marked particle while leaving the other children as unmarked particles.
\item
	Any unmarked particle gives 
birth independently to a random number of unmarked children according to $L$.
\item
	The evolution goes on.
\end{itemize}

	Notice that the marked particles form a descending family line which will be referred to as the \emph{spine}.
	Define $\dot Z_n$ as the population of the $n$th generation in the size-biased tree.
	It is proved in \cite{lyons1995conceptual} that the process $(\dot Z_n)_{n\ge 0}$ is a martingale transform of the process $(Z_n)_{n\ge 0}$ via the martingale $(Z_n)_{n\ge 0}.$
	That is, for any generation number $n$ and any bounded Borel function $g$ on $\mathbb N_0^{n}$,
\begin{equation}
\label{eq:htransformation}
	E [ g ( \dot Z_1, \dots, \dot Z_n) ]
	= \frac { E[ Z_n g( Z_1, \dots, Z_n)]} {E [ Z_n]}.
\end{equation}

	It is natural to consider probabilistic proofs of analogous results of Theorem \ref{thm: Kolmogrov and Yaglom theorem} for more general critical branching processes.
	Vatutin and  Dyakonova \cite{VD} gave a probabilistic proof of Theorem \ref{thm: Kolmogrov and Yaglom theorem}(1) for multitype critical branching processes.
	As far as we know, there is no probabilistic proof of Yaglom's theorem for multitype critical branching processes.
	It seems that it is difficult to adapt the probabilistic proofs in \cite{geiger2000new} and \cite{lyons1995conceptual} for monotype branching processes to more general models, such as multitype branching processes, branching Hunt processes and superprocesses.

	In this note, we propose a $k(k-1)$-type size-biased $\mu$-Galton-Watson tree equipped with a two-spine skeleton, which serves as a change-of-measure of the original $\mu$-Galton-Watson tree;
	and with the help of this two-spine technique, we give a new probabilistic proof of Theorem \ref{thm: Kolmogrov and Yaglom theorem}(2), i.e. Yaglom's theorem.
	The main motivation for developing this new proof for the classical Yaglom's theorem is that this new method is generic, in the sense that it can be generalized to more complicated critical branching systems.
	In fact, in our	follow-up
	paper \cite{RenSongSun2017Spine}, we show that, in a similar spirit, a two-spine structure can be constructed for a class of critical superprocesses, and a probabilistic proof of a Yaglom type theorem can be obtained for those processes.

	Another aspect of our new proof is that we take advantage of a fact that the exponential distribution can be characterized by a particular $x^2$-type size-biased distributional equation.
	An intuitive explanation of our method,
and a comparison with the methods of \cite{geiger2000new} and \cite{lyons1995conceptual}, are
	made in the next subsection.
	We think this new point of view of convergence to the exponential law provides an alternative insight on the classical Yaglom's theorem.

	We now give a formal construction of our $k(k-1)$-type size-biased $\mu$-Galton-Watson tree.
	Denote by $\ddot L$ an \emph{$L(L-1)$-transform} of $L$.
	Fix a generation number $n$ and pick a random generation number $K_n$ uniformly among $\{0,\dots,n-1\}$.
	The \emph{$k(k-1)$-type size-biased $\mu$-Galton-Watson tree with height $n$} is then defined as a particle system such that:
\begin{itemize}
\item
	There is an initial particle which is marked.
\item
	Before or after generation $K_n$, any marked particle gives birth independently to a random number of children according to $\dot L$.
	Pick one of those children randomly as the new marked particle while leaving the other children as unmarked particles.
\item
	The marked particle at generation $K_n$, however, gives birth, independent of other particles, to a random number of children according to $\ddot L$.
	Pick two different particles randomly among those children as the new marked particles while leaving the other children as unmarked particles.
\item
	Any unmarked particle gives birth independently to a random number of unmarked children according to $L$.
\item
	The system stops at generation $n$.
\end{itemize}

	If we track all the marked particles, it is clear that they form a \emph{two-spine skeleton} with $K_n$ being the last generation where those two spines are together.
	It would be helpful to consider this skeleton as two disjoint spines,
	where \emph{the longer spine} is a family line from generation $0$ to $n$ and \emph{the shorter spine} is a family line from generation $K_n+1$ to $n$.
	
	For any $0\le m \le n$, denote by $\ddot Z_m^{(n)}$ the population of the $m$th generation in the $k(k-1)$-type size-biased $\mu$-Galton-Watson tree with height $n$.
	The main reason for proposing such a model is that the process $(\ddot Z_m^{(n)})_{0\le m\le n}$ can be viewed as
    a $Z_n(Z_n-1)$-transform of the process $(Z_m)_{0\le m\le n}$.
	This is made precise in the result below which will be proved in Section \ref{sec:spacesandmeasures}.
\begin{thm}
\label{thm: change of measure}
	Let $(Z_m)_{m\ge 0}$ be a $\mu$-Galton-Watson process and $(\ddot Z_m^{(n)})_{0\le m\le n}$ be the population of a $k(k-1)$-type size-biased $\mu$-Galton-Watson tree with height $n$.
	Suppose that $\mu$ satisfies \eqref{eq:mean} and \eqref{eq:variance}.
	Then, for any bounded Borel function $g$ on $\mathbb N^{n}_0$,
\[
		E[ g ( \ddot Z_1^{(n)}, \dots, \ddot Z_n^{(n)})]
	=
		\frac{ E[ Z_n(Z_n-1) g( Z_1, \dots, Z_n)]} {E [ Z_n ( Z_n - 1)]}.		
\]
\end{thm}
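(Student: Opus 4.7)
The plan is to compute the joint law of (tree, two-spine skeleton) under the construction defining $(\ddot Z_m^{(n)})_{0\le m\le n}$ and then marginalize out the skeleton. Fix a labeled tree $t$ of height $n$ (with, say, Ulam--Harris labels), and by a \emph{two-spine skeleton} in $t$ I mean a pair $\xi=(\{\xi_1,\xi_2\},k)$ where $k\in\{0,\dots,n-1\}$ and $\{\xi_1,\xi_2\}$ is an unordered pair of distinct ancestral lines from the root to generation $n$ which coincide on generations $0,\dots,k$ and differ at generation $k+1$. Write $c_w$ for the number of children of a vertex $w$ of $t$ and let $P_{\mathrm{GW}}(t)=\prod_w\mu(c_w)$ denote the law of a $\mu$-Galton-Watson tree of height $n$.

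The first step is to read off $P(\text{tree}=t,\text{skeleton}=\xi)$ directly from the construction, using the definitions of $\dot L$ and $\ddot L$. At a single-spine vertex $w$ of generation $\ne k$, the $\dot L$-offspring combined with the uniform choice of the next marked child contributes $c_w\mu(c_w)\cdot 1/c_w=\mu(c_w)$; at the splitting vertex of generation $k$, the $\ddot L$-offspring combined with the uniform choice of an unordered pair of two marked children contributes $(c_w(c_w-1)\mu(c_w)/\sigma^2)\cdot 2/(c_w(c_w-1))=2\mu(c_w)/\sigma^2$; each unmarked vertex contributes $\mu(c_w)$; and $k$ is sampled with probability $1/n$. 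All the spurious factors cancel, leaving
\[
P(\text{tree}=t,\text{skeleton}=\xi)=\frac{2}{n\sigma^2}\prod_w\mu(c_w)=\frac{2}{n\sigma^2}\,P_{\mathrm{GW}}(t).
\]

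The second step is to marginalize over skeletons. For fixed $t$, a two-spine skeleton corresponds bijectively to an unordered pair of distinct vertices at generation $n$ (the endpoints of the ancestral lines determine everything, since $k$ is then the generation of their most recent common ancestor). Hence the number of skeletons in $t$ equals $\binom{Z_n(t)}{2}$, and summing yields
\[
P(\text{tree}=t)=\frac{Z_n(t)(Z_n(t)-1)}{n\sigma^2}\,P_{\mathrm{GW}}(t).
\]
The critical-case identities $E[Z_n]=1$ and $\mathrm{Var}(Z_n)=n\sigma^2$ give $E[Z_n(Z_n-1)]=n\sigma^2$, so the above is precisely the $Z_n(Z_n-1)$-size-biased law on $\mu$-Galton-Watson trees of height $n$, and evaluating a bounded Borel function $g(Z_1,\dots,Z_n)$ against both laws yields the identity claimed in the theorem.

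The computation is essentially bookkeeping and I do not foresee a serious obstacle. The point most needing care is verifying the cancellations at the splitting vertex, since this is where the factor $1/\sigma^2$ appears that, together with the $1/n$ from the law of $K_n$, reproduces $1/E[Z_n(Z_n-1)]$; one should also briefly check the degenerate case $Z_n\le 1$, on which no skeleton exists and both sides of the identity vanish.
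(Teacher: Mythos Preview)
Your proposal is correct and follows essentially the same route as the paper: compute the joint law of the tree together with its two-spine skeleton directly from the construction, observe that all offspring-count factors cancel to leave $\frac{1}{n\sigma^2}\mathbf G_n(\mathbf t)$ (up to a factor of $2$), then sum over skeletons to obtain the $Z_n(Z_n-1)$-biased Galton--Watson law. The only cosmetic difference is that you work with \emph{unordered} spine pairs (so each skeleton carries weight $\tfrac{2}{n\sigma^2}\mathbf G_n(\mathbf t)$ and there are $\binom{Z_n(\mathbf t)}{2}$ of them), whereas the paper's formal construction uses an ordered pair $(\ddot C,\ddot C')$ and hence ordered spines $(\mathbf v,\mathbf v')$ (weight $\tfrac{1}{n\sigma^2}\mathbf G_n(\mathbf t)$ each, with $X_n(\mathbf t)(X_n(\mathbf t)-1)$ of them); the factor of $2$ cancels either way.
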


	The idea of considering a branching particle system with more than one spine is not new.
	A particle system with $k$ spines  was constructed in \cite{harris2015many} and used in the  many-to-few formula for branching Markov processes and branching random walks.
	Inspired by \cite{harris2015many}, we use a two-spine model to characterize the $k(k-1)$-type size-biased branching process.

\subsection{Methods.}
\label{sec: Methods}
	Suppose that $X$ is a non-negative 
	random variable with $E[X] \in (0,\infty)$,
	then its distribution conditioned on $\{ X > 0\}$ can be characterized by its conditional expectation $E[X|X>0]$ and its size-biased transform $\dot X$.
	In fact, for each $\lambda \geq 0$,
\begin{equation}
\label{eq: conditional and size-biased transform}
\begin{split}
	&E[1-e^{-\lambda X}|X>0]
	= \frac{E[1-e^{-\lambda X}]}{P(X>0)}
	\\&\quad = \frac{1}{P(X>0)}\int_0^\lambda E[Xe^{-s X}]ds = E[X|X>0]\int_0^\lambda E[e^{-s \dot X}]ds.
\end{split}
\end{equation}
	As a consequence,  
	Theorem \ref{thm: Kolmogrov and Yaglom theorem}	is equivalent to
\begin{equation}
\label{eq: convergence of conditional expectation}
	E\big[\frac{Z_n}{n}| Z_n > 0\big]
	\xrightarrow[n\to \infty]{} \frac{\sigma^2}{2}
\end{equation}
	and
\[
\label{eq: convergence after size-biased}
	E[e^{-s \frac{\dot Z_n}{n}}]
	\xrightarrow[n\to \infty]{} E[e^{-s \dot Y}].
\]
	where $\dot Y$ is a $Y$-transform 
	of the exponential random variable $Y$.
	Indeed, since $E[Z_n] = 1$, \eqref{eq: convergence of conditional expectation} is equivalent to Theorem \ref{thm: Kolmogrov and Yaglom theorem}(\ref{thm:kolmogorov}); 
and assuming \eqref{eq: convergence of conditional expectation}, 
	according to \eqref{eq: conditional and size-biased transform}, we can see \eqref{eq: convergence after size-biased} is equivalent to Theorem \ref{thm: Kolmogrov and Yaglom theorem}(\ref{thm:yaglom}).
	In Section \ref{sec: proofs}, for completeness, we will simplify 
	the argument of \cite{geiger1999elementary} and \cite{VD},
	and give a proof of Theorem \ref{thm: Kolmogrov and Yaglom theorem}(\ref{thm:kolmogorov}).

	Our method of proving
	\eqref{eq: convergence after size-biased}
	takes advantage of a fact that the exponential distribution is characterized by an $x^2$-type size-biased distributional equation.
	This is made precise in the next lemma, which will be proved in Section \ref{sec: proofs}:
\begin{lem} \label{lem: our equation}
	Let $Y$ be a strictly positive random variable with finite second moment.
	Then $Y$ is exponentially distributed if and only if
\begin{equation}
\label{eq: x2 type size-biased equation for exponential distribution}
	\ddot Y \overset{d}
	= \dot Y + U \cdot \dot Y',
\end{equation}
 where $\dot Y$ and $\dot Y'$ are both $Y$-transforms of  $Y$,
$\ddot Y$ is a $Y^2$-transform of $Y$,
	$U$ is a uniform random variable on $[0,1]$, and $\dot Y$, $\dot Y'$, $\ddot Y$
	and $U$ are independent.
\end{lem}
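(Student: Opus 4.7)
The plan is to encode the distributional identity via Laplace transforms. Set $m:=E[Y]$, $\phi(\lambda):=E[e^{-\lambda Y}]$, and $c:=E[Y^2]/m^2$. A direct computation from the definitions of the size-biased transforms gives $E[e^{-\lambda\dot Y}]=-\phi'(\lambda)/m$ and $E[e^{-\lambda\ddot Y}]=\phi''(\lambda)/E[Y^2]$; and conditioning on $U$ and changing variables yields
\[
  E[e^{-\lambda U\dot Y'}]=\int_0^1\frac{-\phi'(\lambda u)}{m}\,du=\frac{1-\phi(\lambda)}{\lambda m}.
\]
Using the independence of $\dot Y,\dot Y',U$ to factor the Laplace transform of the right-hand side of \eqref{eq: x2 type size-biased equation for exponential distribution}, that identity is equivalent to the second-order ODE
\[
  \lambda\phi''(\lambda)+c\,\phi'(\lambda)\bigl(1-\phi(\lambda)\bigr)=0,\qquad \phi(0)=1,\ \phi'(0)=-m.
\]

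The ``if'' direction is then immediate: if $Y$ is exponential with mean $m$, then $c=2$ and $\phi(\lambda)=(1+m\lambda)^{-1}$, and direct substitution verifies the ODE.

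For the converse, write $\psi:=1-\phi$, so $\psi(0)=0$, $\psi'(0)=m$, and the ODE reads $\lambda\psi''+c\psi\psi'=0$. The crucial observation is that both terms are exact up to a lower-order correction: $s\psi''(s)=(s\psi'(s))'-\psi'(s)$ and $c\psi\psi'=(c\psi^2/2)'$. Integrating on $[0,\lambda]$ and using $\psi(0)=0$ collapses the problem to the first-order separable equation $\lambda\psi'(\lambda)=\psi(\lambda)\bigl(1-(c/2)\psi(\lambda)\bigr)$. Partial fractions and separation of variables, together with the normalization $\psi(\lambda)/\lambda\to m$ as $\lambda\downarrow 0$ to pin down the constant of integration, yield
\[
  \psi(\lambda)=\frac{m\lambda}{1+(c/2)m\lambda},\qquad \phi(\lambda)=\frac{1+(c/2-1)m\lambda}{1+(c/2)m\lambda}.
\]

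Strict positivity of $Y$ forces $\lim_{\lambda\to\infty}\phi(\lambda)=P(Y=0)=0$; in the formula above this requires $c/2-1=0$, i.e.\ $c=2$, whereupon $\phi(\lambda)=(1+m\lambda)^{-1}$, the Laplace transform of the exponential distribution with mean $m$. The only non-routine step is the exact-differential trick reducing the second-order ODE to a separable first-order one; everything else is elementary calculus.
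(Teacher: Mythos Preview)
Your argument is correct. The Laplace-transform computations are right, the reduction $\lambda\psi''+c\psi\psi'=0 \Rightarrow \lambda\psi'=\psi(1-(c/2)\psi)$ by integrating on $[0,\lambda]$ is valid (both $\psi'$ and $\psi''$ are bounded since $E[Y^2]<\infty$, so the integration by parts goes through), and the separable equation is solved correctly. One cosmetic point: when you say ``this requires $c/2-1=0$,'' note that if $c<2$ the formula gives $\lim_{\lambda\to\infty}\psi(\lambda)=2/c>1$, which is impossible since $\psi=1-\phi\le 1$; so in all cases $c\ne 2$ leads to a contradiction, and your conclusion stands.

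Your route, however, is genuinely different from the paper's. The paper does \emph{not} solve the ODE. Instead it fixes the candidate limit $\mathbf e\sim\mathrm{Exp}(2/a)$ with $a=E[\dot Y]$, observes that $\mathbf e$ also satisfies \eqref{eq: x2 type size-biased equation for exponential distribution}, and then compares $\dot Y$ with $\dot{\mathbf e}$ via two auxiliary lemmas: a ``size-biased add-on'' comparison (Lemma~\ref{lem: compare}) giving
\[
\big|E[e^{-\lambda\dot Y}]-E[e^{-\lambda\dot{\mathbf e}}]\big|\le a\int_0^\lambda\!\!\int_0^1\big|E[e^{-su\dot Y}]-E[e^{-su\dot{\mathbf e}}]\big|\,du\,ds,
\]
and a Gronwall-type bootstrap (Lemma~\ref{lem: zero inequality}) forcing the difference to vanish. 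Your approach is more self-contained and purely computational; the paper's is more indirect but deliberately so: the same two lemmas are reused verbatim in the proof of Yaglom's theorem itself, where one compares $(\dot Z_n-1)/n$ with $\dot Y$ through the approximate version \eqref{eq: Our insight} of the distributional equation. So the paper's proof of Lemma~\ref{lem: our equation} doubles as a warm-up for the main argument, whereas your ODE method, while cleaner here, does not extend to that approximate setting.
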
	
	With this lemma and Theorem \ref{thm: change of measure}, we can give an intuitive explanation of the exponential convergence in Yaglom's Theorem.
	From the construction of the $k(k-1)$-type size-biased $\mu$-Galton-Watson tree $(\ddot Z^{(n)}_m)_{0\le m\le n}$, we see that the population $\ddot Z^{(n)}_n$ in the $n$th generation can be separated into two parts: descendants
	from the longer spine and descendants from the shorter spine.
	Due to their construction,
	the first part, the descendants from the longer spine at generation $n$,
	is distributed approximately like $\dot Z_n$,
	while the second part, the descendants from the shorter spine at generation $n$, is distributed approximately like $\dot Z_{ \floor{U\cdot n}}$.
	Those two parts are approximately independent of each other.
	So, after a renormalization, we have roughly that
\begin{equation}
\label{eq: Our insight}
	\frac{\ddot Z_n^{(n)}}{n}
	\overset{d} \approx \frac{\dot Z_n}{n} +
	U \cdot \frac{   \dot Z'_{  \floor{ U n }  }   }    {   Un   },
\end{equation}
	where the process $(\dot Z'_m)$ is an independent copy of $(\dot Z_m)$.
	Suppose that $\dot Z_n/n$ converges weakly to a random variable $\dot Y$, and $\ddot Z_n/n$ converges weakly to a random variable $\ddot Y$. 
	Then, according to \cite[Lemma 4.3]{lyons1995conceptual}, $\ddot Y$ is a size-biased transform of $\dot Y$. 
Therefore, letting $n\to\infty$ in \eqref{eq: Our insight}, 
	$\dot Y$ should satisfy \eqref{eq: x2 type size-biased equation for exponential distribution}, which, by Lemma \ref{lem: our equation}, suggests that \eqref{eq: convergence after size-biased} is true.
	
	It is interesting to compare this method of proving exponential convergence with the methods 
	used in \cite{geiger2000new} and \cite{lyons1995conceptual}.
	In \cite{lyons1995conceptual}, Lyons, Pemantle and Peres characterize the exponential distribution by a different
but well-known $x$-type size-biased distributional equation:
	A nonnegative random variable $Y$ with positive finite mean is exponentially distributed if and only if it satisfies that
\begin{equation}
\label{eq: Lyons' distributional equation}
		Y 		\overset{d}= U \cdot \dot Y
\end{equation}
   where $\dot Y$ is a $Y$-transform of $Y$,  and $U$ is a uniform random variable on
$[0,1]$, which is independent of $\dot Y$.
    With the help of the size-biased tree, they then show that $\ceil{U \cdot \dot Z_n}$ is distributed approximately like $Z_n$ conditioned on $\{Z_n > 0\}$.
	So, after a renormalization, they have roughly that
\begin{equation}
\label{eq: Lyons' insight}
	\Big\{\frac{Z_n}{n} ; P(  \cdot| Z_n > 0) \Big\}
	\overset{d}{\approx} U \cdot \frac{ \dot Z_n}{n}.
\end{equation}
	Suppose that $\{Z_n/n; P(\cdot | Z_n > 0)\}$ converge weakly to a random variable $Y$, and $\dot Z_n /n$ converge weakly to a random variable $\dot Y$.
	Then, according to \cite[Lemma 4.3]{lyons1995conceptual}, $\dot Y$ is the size-biased transform of $Y$.
	Therefore, letting $n\to \infty$ in \eqref{eq: Lyons' insight}, 
	$Y$ should satisfy \eqref{eq: Lyons' distributional equation}, which suggests that $Y$ is exponentially distributed.
	
	In \cite{geiger2000new}, Geiger characterizes the exponential distribution by another distributional equation:
	If $Y^{(1)}$ and $Y^{(2)}$ are independent copies of 
 a random variable $Y$ with positive finite variance, 
	and $U$ is an independent uniform random variable on $[0,1]$, then $Y$ is exponentially distributed if and only if
\begin{equation}
\label{eq: Geiger's equation}
	Y	\overset{d} = U (Y^{(1)} + Y^{(2)}).
\end{equation}
	Geiger then shows that for $(Z_n)$, conditioned on non-extinction at generation $n$,
	the distribution of the generation of the most recent common ancestor (MRCA) of the particles at generation $n$ is asymptotically uniform among $\{0,1,\dots,n\}$ (a result due to \cite{Zubkov1975}, see also \cite{geiger1999elementary}), and there are asymptotically two children of
the MRCA, each with at least 1 descendant in generation $n$.
	After a renormalization, roughly speaking, Geiger has that
\begin{equation}
\label{eq: Geiger's insight}
	\Big\{\frac{Z_n}{n} ; P(  \cdot| Z_n > 0) \Big\}
		\overset{d}{\approx} U \cdot \frac{ Z^{(1)}_{  \floor{Un}  }}{Un} + U \cdot \frac{ Z^{(2)}_{ \floor{Un} }}{Un} ,
\end{equation}
	where for each $m$, $Z_m^{(1)}$ and $Z_m^{(2)}$ are independent copies of $\{Z_m; P(\cdot | Z_m > 0)\}$.
	Therefore, if $\{Z_n/n; P(\cdot| Z_n > 0)\}$ converges weakly to a random variable $Y$, then $Y$ should satisfy \eqref{eq: Geiger's equation}, which suggests that $Y$ is exponentially distributed.
	
	From this comparison, we see that all the methods mentioned above share one similarity: They all establish the exponential convergence via some particular distributional equation.
	However, since the equations \eqref{eq: x2 type size-biased equation for exponential distribution}, \eqref{eq: Lyons' distributional equation} and \eqref{eq: Geiger's equation} are different, the actual way of proving the convergence varies.
	In \cite{lyons1995conceptual}, an elegant tightness argument is made along with \eqref{eq: Lyons' insight}.
	However, it seems that this tightness argument is not suitable for \eqref{eq: Geiger's insight}, due to a property that the conditional convergence for some subsequence $Z_{n_k}/n_k$ implies the convergence of $U \cdot \dot Z_{n_k}/n_k$,
   	but does not implies the convergence of $Z^{(i)}_{ \floor{Un_k}}/Un_k, i=1, 2$.
	Instead, a contraction type argument in the $L^2$-Wasserstein metric is used in Geiger \cite{geiger2000new}.
	
	Due to similar reasons, in this note,
	to actually prove the exponential convergence using \eqref{eq: Our insight} and \eqref{eq: x2 type size-biased equation for exponential distribution}, some efforts also must be made.
	We observe that the distributional equation \eqref{eq: Our insight} admits
    a so-called size-biased add-on structure, which is related to L\`evy's
	theory of infinitely divisible distributions: Suppose that $X$ is a nonnegative random variable with $ a := E [X]\in (0,\infty)$,
	then $X$ is infinitely divisible if and only if there exists a nonnegative random variable $A$ independent of $X$ such that $\dot X 	\overset{d} = X + A$.
	In fact,
	the Laplace exponent of $X$ can be expressed as
\[
	-\ln E[ e^{-\lambda X}]
	 =  a \alpha(\{0\}) \lambda+ a \int_{(0,\infty)} \frac{1 - e^{-\lambda y}}{y} \alpha(dy),
\]
	where $\alpha$ is the distribution of $A$.
	Moreover, if $A$ is strictly positive, then
\begin{equation}\label{eq: Laplace exponent for size-biased add-on equation}
	-\ln E[ e^{-\lambda X}]
	=  a  \int_0^\lambda E [e^{-s A}] ds.
\end{equation}
	From this point of view, after considering the Laplace transforms of 
	\eqref{eq: Our insight} and \eqref{eq: x2 type size-biased equation for exponential distribution}, we can establish the convergence of $E[e^{-\lambda \dot Z_n/n}]$ to $E[e^{-\lambda \dot {Y}}]$, which will eventually lead us to Yaglom's theorem.
	This is made precise in Section \ref{sec: proofs}.
	A similar type of argument is also  used in our
	follow-up
	paper \cite{RenSongSun2017Spine} for critical superprocesses.
	
\section{Trees and their decompositions}
\label{sec:preliminary}
\subsection{Spaces and measures}
\label{sec:spacesandmeasures}
	In this subsection, we give a proof of Theorem \ref{thm: change of measure}.
 	Consider \emph{particles} as elements in the space
\[
		\mathcal U
	:=
	\{\emptyset\}\cup\bigcup_{k=1}^\infty \mathbb N^k,
\]
	where $\mathbb N:=\{1,2,\dots\}$.
	Therefore elements in $\mathcal U$ are of the form 213, which we read as the individual being the 3rd child of the 1st child of the 2nd child of the initial ancestor $\emptyset$.
	For two particles $u=u_1\dots u_n, v=v_1\dots v_m\in\mathcal U$, $uv$ denotes the concatenated particle $uv:=u_1\dots u_nv_1\dots v_m$.
	We use the convention $u\emptyset = \emptyset u = u$ and $u_1\dots u_n=\emptyset$ if $n=0$.
	For any particle $u:=u_1\dots u_{n-1}u_n$, we define its \emph{generation} as $| u |:=n$ and its \emph{parent particle} as $\overleftarrow{u}:=u_1\dots u_{n-1}$.
	For any particle $u \in \mathcal U$ and any subset $\mathbf a \subset \mathcal U$, we define the \emph{number of children of $u$ in $\mathbf a$} as $l_u(\mathbf a) := \#\{\alpha\in \mathbf a:\overleftarrow{\alpha}=u\} $.
	We also define the \emph{height} of $\mathbf a$ as $|\mathbf a|:=\sup_{\alpha\in \mathbf a}|\alpha|$ and its \emph{population in the $n$th generation} as $X_n(\mathbf a):=\#\{u\in \mathbf a:|u|=n\}$.
	A \emph{tree} $ \mathbf t $ is defined as a subset of $\mathcal U$ such that there exists an $\mathbb N_0$-valued sequence $(l_u)_{u\in \mathcal U}$,
	indexed by $\mathcal U$, satisfying
\[
		 \mathbf t
	=\{u_1\dots u_m\in \mathcal U: m\ge 0, u_j\leq l_{u_1\dots u_{j-1}}, \forall  j=1,\dots,m\}.
\]
	A \emph{spine} $ \mathbf v$ on a  tree $ \mathbf t $ is defined as a sequence of particles $\{v^{(k)}:k=0,1,\dots,| \mathbf t |\}\subset \mathbf t $ such that $v^{(0)}=\emptyset$ and $\overleftarrow{v^{(k)}}=v^{(k-1)}$ for any $k=1,\dots, | \mathbf t |$.
	In the case that $| \mathbf t |=\infty$, we simply write $k=0,1,\dots$ as $k=0,1,\dots, | \mathbf t |$.

	Fix a generation number $n\in \mathbb N$. Define the following spaces:
\begin{itemize}
\item
	\emph{The space of trees with height no more than $n$},
\[
		\mathbb T_{\leq n}
	:=\{ \mathbf t : \mathbf t \text{ is a tree with }| \mathbf t | \leq n\}.
\]
\item
	\emph {The space of $n$-height trees with one distinguishable spine},
\[
		\dot{\mathbb T}_n
	:=\{( \mathbf t , \mathbf v): \mathbf t  \text{ is a tree with } | \mathbf t |=n,  \mathbf v \text{ is a spine on }  \mathbf t \}.
\]
\item
	\emph{The space of $n$-height trees with two different distinguishable spines},
\[
		\ddot{\mathbb T}_n
	:=\{( \mathbf t , \mathbf v, \mathbf v'):( \mathbf t , \mathbf v)\in\dot{\mathbb T}_n,( \mathbf t , \mathbf v')\in\dot{\mathbb T}_n, \mathbf v\neq \mathbf v'\}.
\]
\end{itemize}

	Let $(L_u)_{u\in\mathcal U}$ be a collection of independent random variables with law $\mu$, indexed by $\mathcal U$.
	Denote by $T$ the random tree defined by
\[
		T
	:=\{u_1\dots u_m\in \mathcal U: 0\le m\le n, u_j\leq L_{u_1\dots u_{j-1}},\forall j=1,\dots,m\}.
\]
	We refer to $T$ as a \emph{$\mu$-Galton-Watson tree with height no more than n} since its population $(X_m(T))_{0\le m\le n}$ is a $\mu$-Galton-Watson process stopped at generation $n$.
	Define the \emph{$\mu$-Galton-Watson measure $\mathbf G_n$} on $\mathbb T_{\leq n}$ as the law of the random tree $T$.
	That is, for any $ \mathbf t \in\mathbb T_{\leq n}$,
\[
		\mathbf G_n( \mathbf t )
    :=P(T= \mathbf t )
	=P(L_u=l_u( \mathbf t )\text{ for any } u\in \mathbf t  \text{ with }|u|<n)
	=\prod_{u\in  \mathbf t :|u|<n}\mu(l_u( \mathbf t )).
\]

	Recall that $\dot L$ is an $L$-transform of $L$.
	Define $\dot C$ as a random number which, conditioned on $\dot L$, is uniformly distributed on $\{1,\dots,\dot L\}$.
	Independent of $(L_u)_{u\in\mathcal U}$, let $(\dot L_u,\dot C_u)_{u\in \mathcal U}$ be a collection of independent copies of $(\dot L,\dot C)$, indexed by $\mathcal U$.
	We then use $(L_u)_{u\in\mathcal U}$ and $(\dot L_u,\dot C_u)_{u\in\mathcal U}$ as the building blocks to construct the size-biased $\mu$-Galton-Watson tree $\dot T$ and its distinguishable spine $\dot V$ following the steps described in Section \ref{sec:model}.
	We use $L_u$ as the number of children of particle $u$ if $u$ is unmarked and use $\dot L_u$ if $u$ is marked.
	In the latter case, we always set the
	$\dot C_u$-th child of $u$, i.e. particle $u \dot C_u$,
	as the new marked particle.
	For convenience, we stop the system at generation $n$. To be precise, the random spine $\dot V$ is defined by
\[
		\dot V
	:=\{v_1\dots v_m\in \mathcal U:0\le m\le n, v_j=\dot C_{v_1\dots v_{j-1}},\forall j=1,\dots,m\},
\]
	and the random tree $\dot T$ is defined by
\[
		\dot T
	:=\{u_1\dots u_m\in\mathcal U: 0\le m\le n,u_j\leq \tilde L_{u_1\dots u_{j-1}},\forall j=1,\dots,m\},
\]
	where, for any $u\in\mathcal U$, $\tilde L_u:=L_u\mathbf 1_{u\not\in \dot V}+\dot L_u\mathbf 1_{u\in \dot V}$.

	We now consider the distribution of the $\dot{\mathbb T}_n$-valued random element $(\dot T,\dot V)$.
	For any $( \mathbf t , \mathbf v)\in\dot{\mathbb T}_n$, the event $\{(\dot T,\dot V)=( \mathbf t , \mathbf v)\}$ occurs if and only if:
\begin{itemize}
\item
    $L_u=l_u( \mathbf t )$ for each $u\in  \mathbf t \setminus \mathbf v$ with $| u |<n$ and
\item
	$(\dot L_{v_1\dots v_m},\dot C_{v_1\dots v_m})=(l_{v_1\dots v_m}( \mathbf t ),v_{m+1})$ for each $v_1\dots v_{m+1}\in \mathbf v$ with $0\le m\le n-1$.
\end{itemize}
    Therefore, the distribution of $(\dot T,\dot V)$ can be determined by
\begin{equation}
\label{eq:treespinemeasure}
		P((\dot T,\dot V)=( \mathbf t , \mathbf v))
	=\prod_{u\in  \mathbf t \setminus \mathbf v:|u|<n}\mu(l_u( \mathbf t ))
	\cdot \prod_{u\in  \mathbf v:| u| <n}l_u( \mathbf t )\mu(l_u( \mathbf t ))\frac{1}{l_u( \mathbf t )}
	= \mathbf G_n( \mathbf t ).
\end{equation}
	
	The \emph{size-biased $\mu$-Galton-Watson measure $\dot {\mathbf G}_n$} on $\mathbb T_{\leq n}$ is then defined as the law of the $\mathbb T_{\leq n}$-valued random element $\dot T$. That is, for any $ \mathbf t \in\mathbb T_{\leq n}$,
\begin{equation}
\label{eq:sizebiasedGWmeasure}
\begin{split}
		\dot {\mathbf G}_n( \mathbf t )
	&:= P(\dot T= \mathbf t )
	= \sum_{ \mathbf v:( \mathbf t , \mathbf v)\in \dot{\mathbb T}_n} P((\dot T,\dot V)=( \mathbf t , \mathbf v))
	\\&= \#\{ \mathbf v:( \mathbf t , \mathbf v)\in \dot{\mathbb T}_n\} \cdot \mathbf G_n( \mathbf t )
	= X_n( \mathbf t ) \cdot \mathbf G_n( \mathbf t ).
\end{split}
\end{equation}

	Equations \eqref{eq:treespinemeasure}, \eqref{eq:sizebiasedGWmeasure} and their consequence \eqref{eq:htransformation} were first obtained in \cite{lyons1995conceptual}.
	We use these equations to help us to understand how the $k(k-1)$-type size-biased $\mu$-Galton-Watson tree can be represented.
	
	Recall that $K_n$ is a random generation number uniformly distributed on $\{0,\dots,n-1\}$,
	and $\ddot L$ is an $L(L-1)$-transform of $L$.
	Define $(\ddot C,\ddot C')$ as a random vector which, conditioned on $\ddot L$, is uniformly distributed on $\{(i,j)\in\mathbb N^2:1\leq i\neq j\leq \ddot L\}$.
	Suppose that $(L_u)_{u\in\mathcal U}, (\dot L_u,\dot C_u)_{u\in \mathcal U}$, $(\ddot L,\ddot C,\ddot C')$ and $K_n$ are independent of each other.
	We now use these elements to build the $k(k-1)$-type size-biased $\mu$-Galton-Watson tree $\ddot T$ and its two different distinguishable spines $\ddot V$ and $\ddot V'$ following the steps described in Section \ref{sec:model}.
	Write $C_u:=\dot C_u\mathbf 1_{|u|\neq K_n}+\ddot C\mathbf 1_{|u|=K_n}$ and $C'_u:=\dot C_u\mathbf 1_{|u|\neq K_n}+\ddot C'\mathbf 1_{|u|=K_n}$.
	We define the random spines $\ddot V$ and $\ddot V'$ as
\[ \begin{split}
        \ddot V
	&:= \{v_1\dots v_m\in \mathcal U:0\le m\le n, v_j= C_{v_1\dots v_{j-1}},\forall j=1,\dots,m\},
	\\ \ddot V'
	&:= \{v_1\dots v_m\in \mathcal U:0\le m \le n, v_j= C'_{v_1\dots v_{j-1}},\forall j=1,\dots,m\},
\end{split}\]
	and the random tree $\ddot T$ as
\[
	    \ddot T
	:=
		\{u_1\dots u_m\in\mathcal U: 0\le m\le n,u_j\leq L''_{u_1\dots u_{j-1}},\forall j=1,\dots,m\},
\]
	where, for any $u\in\mathcal U$, $L''_u:=L_u \mathbf 1_{u\not\in \ddot V\cup\ddot V'}+\dot L_u \mathbf 1_{u\in \ddot V\cup\ddot V',|u|\neq K_n}+\ddot L\mathbf 1_{u\in \ddot V\cup\ddot V',|u|=K_n}$.

	We now consider the distribution of $(\ddot T,\ddot V,\ddot V')$.
	For any $( \mathbf t , \mathbf v, \mathbf v')\in\ddot {\mathbb T}_n$, the event $\{(\ddot T,\ddot V,\ddot V')=( \mathbf t , \mathbf v, \mathbf v')\}$ occurs if and only if:
\begin{itemize}
\item
    $K_n=k_n:=| \mathbf v\cap \mathbf v'|$,
\item
    $L_u=l_u( \mathbf t )$ for each $u\in  \mathbf t \setminus( \mathbf v\cup \mathbf v')$ with $| u| <n$,
\item
	$(\dot L_{v_1\dots v_m},\dot C_{v_1\dots v_m})=(l_{v_1\dots v_m}( \mathbf t ),v_{m+1})$ for each $v_1\dots v_mv_{m+1}\in \mathbf v\cup \mathbf v'$ with $k_n\neq m<n$ and
\item
	$(\ddot L,\ddot C,\ddot C')=(l_{v_1\dots v_{k_n}}( \mathbf t ),v_{k_n+1},v'_{k_n+1})$ for $v_1\dots v_{k_n}v_{k_n+1}\in \mathbf v$ and $v_1\dots v_{k_n}v'_{k_n+1}\in \mathbf v'$.
\end{itemize}
	Using this analysis, we get that
\[\begin{split}
		P\big((\ddot T,\ddot V,\ddot V')=( \mathbf t , \mathbf v, \mathbf v')\big)
		&=\frac{1}{n} \cdot \prod_{u\in  \mathbf t \setminus( \mathbf v\cup  \mathbf v'):|u|<n} \mu(l_u( \mathbf t )) \cdot \prod_{u\in  \mathbf v\cup  \mathbf v':k_n\neq|u|<n}l_u( \mathbf t ) \mu(l_u( \mathbf t ))\frac{1}{l_u( \mathbf t )}
    \\&\qquad \cdot \prod_{u\in  \mathbf v \cup  \mathbf v':|u|=k_n}\frac{l_u( \mathbf t )(l_u( \mathbf t )-1) \mu(l_u( \mathbf t ))}{\sigma^2}\frac{1}{l_u( \mathbf t )(l_u( \mathbf t )-1)}\\
	&= \frac{1}{n\sigma^2} \mathbf G_n( \mathbf t ).
\end{split}\]
	
	The \emph{$k(k-1)$-type size-biased $\mu$-Galton-Watson measure $\ddot{\mathbf G}_n$} on $\mathbb T_{\leq n}$ is then defined as the law of the random element $\ddot T$. That is, for any $ \mathbf t \in\mathbb T_{\leq n}$,
\begin{equation}
\label{eq:k(k-1)typesizebiasedGWmeasure}
\begin{split}
		\ddot{\mathbf G}_n( \mathbf t )
	&:= P(\ddot T= \mathbf t )
	= \sum_{( \mathbf v, \mathbf v'):( \mathbf t , \mathbf v, \mathbf v')\in \ddot {\mathbb T}_n} P\big((\ddot T,\ddot V,\ddot V')=( \mathbf t , \mathbf v, \mathbf v')\big)
	\\&= \#\{( \mathbf v, \mathbf v'):( \mathbf t , \mathbf v, \mathbf v')\in \ddot {\mathbb T}_n\} \cdot \frac{\mathbf G_n( \mathbf t )}{n\sigma^2}
	= \frac{X_n( \mathbf t )(X_n( \mathbf t )-1)}{n\sigma^2} \cdot{\mathbf G}_n( \mathbf t ).
\end{split}
\end{equation}

	We note in passing that, because of the way they are constructed, the measures $(\ddot{\mathbf G}_n)_{n\ge 1}$ are not consistent, that is, the measure $\ddot{\mathbf G}_n$ is not the restriction of $\ddot{\mathbf G}_{n+1}$.
	This implies that the change of measure in Theorem \ref{thm: change of measure} is not a martingale change of measure.
\medskip
\begin{proof}[Proof of Theorem \ref{thm: change of measure}]
    Note that 
    \[
    \{(X_m( \mathbf t ))_{0\le m\le n}; {\mathbf G}_n\}  \overset{d}{=} (Z_m)_{0\le m\le n}
\quad    \mbox{and} \quad \{(X_m( \mathbf t ))_{0\le m\le n};\ddot{\mathbf G}_n\}  \overset{d}{=} (\ddot Z_m)_{0\le m\le n}.
    \]
    According to \eqref{eq:k(k-1)typesizebiasedGWmeasure}, for any bounded Borel function $g$ on $\mathbb N_0^n$, we can verify that
\begin{equation} \label{eq:proofofchangeofmeasure}
\begin{split}
	&E [ g ( \ddot Z_1^{(n)}, \dots, \ddot Z_n^{(n)})]
	= \ddot{\mathbf G}_n [g ( X_1(  \mathbf t ), \dots, X_n(  \mathbf t ))]
    \\ &\quad = {\mathbf G}_n \big[ \frac { X_n( \mathbf t ) ( X_n( \mathbf t ) - 1)} {n \sigma^2} g (X_1( \mathbf t ), \dots, X_n( \mathbf t ))\big]
	\\&\quad = \frac { 1} { n \sigma^2} E[ Z_n ( Z_n - 1) g( Z_1, \dots, Z_n)].
\end{split}
\end{equation}
	Taking $g\equiv 1$ in equation \eqref{eq:proofofchangeofmeasure}, we get that
\begin{equation}
	\label{eq: second moment}
	E [Z_n(Z_n-1)]= E [\dot Z_n - 1]= n\sigma^2.
\end{equation}
	\end{proof}

\subsection{Spine decompositions.}
\label{sec:spinesdecomposition}

	Using the	notation
	introduced in the previous	section, we are now ready to
	give a precise meaning to \eqref{eq: Our insight}:
\begin{prop}\label{prop: size-biased add-on of size-biased tree }
	Let $(\dot Z_m)_{0 \leq m \leq n}$ be the population of a size-biased $\mu$-Galton watson tree and $(\ddot Z^{(n)}_m)_{0 \leq m \leq n}$ be the population of a $k(k-1)$-type size-biased $\mu$-Galton-Watson tree with height $n$.
	Suppose that $\mu$ satisfies \eqref{eq:mean} and \eqref{eq:variance}.
	Then, we have
\[
	E [ e^{- \lambda \ddot Z_n^{(n)}} ]
	= E [e^{-\lambda \dot Z_n}] E[g(\lambda, \floor{Un})e^{-\lambda \dot Z_{\floor{Un}}}],
\]
where $U$ is a uniform random variable on $[0,1]$ independent of $\{\dot Z_m: 0\le m\le n\}$;
and $g(\lambda, m)$ is a function on $[0,\infty) \times \mathbb N_0$ such that
$g(\lambda, m) \to 1$, uniformly in $\lambda$ as $m\to \infty$.
\end{prop}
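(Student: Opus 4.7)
The plan is to condition on $K_n$, the generation at which the two marked spines branch apart, decompose the generation-$n$ population into two conditionally independent pieces, and compute their Laplace transforms via the usual product formula along the spine(s); averaging over $K_n$ then yields the right-hand side of the claim.

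Given $\{K_n=k\}$, write $T_{\mathrm{short}}$ for the subtree rooted at the particle of generation $k+1$ that starts the shorter spine. By the construction in Section \ref{sec:spacesandmeasures}, $T_{\mathrm{short}}$ is a plain size-biased $\mu$-Galton-Watson tree of height $n-k-1$, so its generation-$n$ population has the distribution of $\dot Z_{n-k-1}$. The complementary tree $T_{\mathrm{long}}$ (the longer spine together with all its unmarked Galton-Watson subtrees) looks exactly like a size-biased $\mu$-Galton-Watson tree of height $n$, except that at generation $k$ the spine particle exhibits only $\ddot L-1$ visible children (one marked continuation and $\ddot L-2$ unmarked Galton-Watson roots) instead of the usual $\dot L$.

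Setting $f_j(\lambda):=E[e^{-\lambda Z_j}]$, the standard spine factorization gives
\[
E[e^{-\lambda \dot Z_n}]=e^{-\lambda}\prod_{m=0}^{n-1}E[f_{n-m-1}(\lambda)^{\dot L-1}],
\]
and the same factorization applied to the generation-$n$ population of $T_{\mathrm{long}}$ produces the identical product with the factor at index $m=k$ replaced by $E[f_{n-k-1}(\lambda)^{\ddot L-2}]$. Taking the ratio and multiplying by the independent shorter-spine contribution gives
\[
E\bigl[e^{-\lambda \ddot Z_n^{(n)}}\,\big|\,K_n=k\bigr]
= E[e^{-\lambda \dot Z_n}]\cdot g(\lambda,n-k-1)\cdot E[e^{-\lambda \dot Z_{n-k-1}}],
\]
where $g(\lambda,j):=E[f_j(\lambda)^{\ddot L-2}]/E[f_j(\lambda)^{\dot L-1}]$. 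Averaging over $K_n$ uniform on $\{0,\dots,n-1\}$ and substituting $j=n-k-1$, the Ces\`aro mean $\tfrac1n\sum_j$ becomes an expectation evaluated at $\floor{Un}$ for $U$ uniform on $[0,1]$ independent of $(\dot Z_m)_{0\le m\le n}$, yielding exactly the claimed identity.

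It then remains to verify that $g(\lambda,j)\to1$ uniformly in $\lambda\ge0$ as $j\to\infty$. Since $\dot L\ge1$ and $\ddot L\ge2$ almost surely (the size-bias weights vanish on $\{L=0\}$ and $\{L\le1\}$ respectively), the exponents $\dot L-1$ and $\ddot L-2$ are nonnegative. Hence for every $\lambda\ge 0$ the quantities $f_j(\lambda)^{\dot L-1}$ and $f_j(\lambda)^{\ddot L-2}$ are sandwiched between $1$ and the $\lambda$-free lower bounds $(1-P(Z_j>0))^{\dot L-1}$ and $(1-P(Z_j>0))^{\ddot L-2}$ respectively. Since $P(Z_j>0)\to0$ by Theorem \ref{thm: Kolmogrov and Yaglom theorem}(1), dominated convergence drives both lower bounds to $1$ in expectation, delivering the required uniform convergence of both numerator and denominator of $g(\lambda,j)$. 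The main effort is thus the tree-level bookkeeping of the two-spine decomposition and the identification of the single modified factor at generation $K_n$; once this factorization is in place, the uniform control of $g$ is essentially automatic.
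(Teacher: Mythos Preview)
Your proof is correct and follows essentially the same route as the paper's. Your function $g(\lambda,j)=E[f_j(\lambda)^{\ddot L-2}]/E[f_j(\lambda)^{\dot L-1}]$ coincides with the paper's $g(\lambda,m)=E[e^{-\lambda Z^{(\ddot L-2)}_m}]/E[e^{-\lambda Z^{(\dot L-1)}_m}]$, and your sandwich bounds $(1-P(Z_j>0))^{\ddot L-2}\le f_j(\lambda)^{\ddot L-2}\le 1$ (and similarly for $\dot L-1$) yield exactly the paper's two-sided estimate $P(Z^{(\ddot L-2)}_j=0)\le g(\lambda,j)\le P(Z^{(\dot L-1)}_j=0)^{-1}$; the only difference is that you phrase the decomposition as ``short subtree versus long tree with one modified factor'' while the paper indexes contributions by the nearest spine ancestor.
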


\begin{proof}
	For any particle $u=u_1\dots u_n$, we define
$	[\emptyset, u]
	:= \{u_1\dots u_j:j=0,\dots, n \}$
	as the \emph{descending family line from $\emptyset$ to $u$}.
	The particles in $\dot T$ can be separated according to their nearest spine ancestor.
	For each $k = 0, \dots, n$, we write
$\dot A_k
	:= \{u\in\dot T:| [\emptyset, u] \cap \dot V |=k\}.$
	Then
\begin{equation}
\label{eq:generationseperation}
		X_n(\dot T)
	=
		\sum_{k=0}^nX_n(\dot A_k).
\end{equation}
	Notice that the right side of the above equation is a 
	sum of independent random variables;
	and from their construction, we see that $X_n(\dot A_k) \overset{d}= Z_{n-k-1}^{(\dot L - 1)}$.
	Here,  $Z^{(\dot L - 1)}_{(-1)}:= 1$ and $(Z^{(\dot L - 1)}_m)_{m\in \mathbb N_0}$ denotes a $\mu$-Galton-Watson process with $Z_0^{(\dot L - 1)}$ distributed according to $\dot L - 1$.
	Taking	Laplace transforms
	on both sides of \eqref{eq:generationseperation} we get
\begin{equation} \label{eq: laplace transform of one-spine decomposition}
	E [e^{-\lambda \dot Z_n}]
	= \prod_{k = 0}^n E[ e^{-\lambda Z^{(\dot L - 1)}_{n-k-1}} ].
\end{equation}
	
	Similarly, we consider the $k(k-1)$-type size-biased $\mu$-Galton-Watson tree $(\ddot T,\ddot V,\ddot V')$.
	Write
\[
	\ddot A^l_k := \{u\in\ddot T: | [\emptyset, u]\cap \ddot V | = k, [\emptyset , u] \cap (\ddot V' \setminus \ddot V ) = \emptyset\}
\]
	and
\[
	\ddot A^s_k := \{u\in\ddot T: | [\emptyset, u]\cap \ddot V' | = k, [\emptyset , u] \cap (\ddot V' \setminus \ddot V) \neq \emptyset\}.
\]
	Then, 
\begin{equation}\label{eq:rawtwospinedecomposition}
		X_n(\ddot T)
	=
		\sum_{k=0}^nX_n(\ddot A^l_k) + \sum_{k=K_n + 1}^n X_n(\ddot A^s_k).
\end{equation}
	Notice that, conditioning on $K_n = m$ with $m\in\{0,\dots,n-1\}$, the right side of the above equation is a 
   sum of independent random variables; and from their construction, we see that
	$X_n(\ddot A^l_k) \overset{d}{=} Z^{(\dot L - 1)}_{n-k-1}$
	for each $k \neq m$;
	$X_n(\ddot A^l_m) \overset{d}{=} Z^{(\ddot L - 2)}_{n-m-1}$;
	and $X_n(\ddot A^s_k) \overset{d}{=} Z^{(\dot L - 1)}_{n-k-1}$ for each $k \geq m+1$.
	Here, $Z^{(\ddot L - 2)}_{(-1)}:= 1$ and $(Z^{(\ddot L - 2)}_k)_{k\in \mathbb N_0}$ is a $\mu$-Galton-Watson process with initial population distributed according to $\ddot L-2$.

	Taking Laplace transform on both sides of \eqref{eq:rawtwospinedecomposition}  and using \eqref{eq: laplace transform of one-spine decomposition}, we get
\[  \begin{split}
	&E [ e^{- \lambda \ddot Z_n^{(n)}} ]
	= \frac{1}{n}\sum_{m=0}^{n-1} \Big( \prod_{k=0,k\neq m}^{n} E[e^{-\lambda Z^{(\dot L - 1)}_{n-k-1}}] \Big) \cdot E [e^{-\lambda Z^{(\ddot L - 2)}_{n-m-1}}] \cdot \Big(\prod_{k= m+1}^n E [e^{-\lambda Z^{(\dot L - 1)}_{n-k-1}}]\Big)
	 \\&\quad = E [e^{-\lambda \dot Z_n}]  \frac{1}{n} \sum_{m=0}^{n-1}   \frac{ E [e^{-\lambda Z^{(\ddot L - 2)}_{n-m-1} }] }{ E[e^{-\lambda Z^{(\dot L - 1)}_{n-m-1}  }] } \cdot E[e^{- \lambda \dot Z_{n-m-1}}]
	 \\&\quad = E [e^{-\lambda \dot Z_n}]  \frac{1}{n}\sum_{m=0}^{n-1} \frac{ E [e^{-\lambda Z^{(\ddot L - 2)}_{m}}] }{ E[e^{-\lambda Z^{(\dot L - 1)}_{m}}] } \cdot E[e^{- \lambda \dot Z_{m}}]
	= E [e^{-\lambda \dot Z_n}] E[g(\lambda,\floor{Un})e^{-\lambda \dot Z_{\floor{Un}}}],
\end{split}
\]
	where
\[
	P( Z^{(\ddot L - 2)}_m=0 )
	\leq	g(\lambda,m)
	: = \frac{ E [e^{-\lambda Z^{(\ddot L - 2)}_{m}}] }{ E[e^{-\lambda Z^{(\dot L - 1)}_{m}}] }
	\leq P ( Z^{(\dot L - 1)}_m = 0 )^{-1}.
\]
	Notice that, from the criticality, $P (  Z^{(\ddot L - 2)}_m=0 )$ and $P ( Z^{(\dot L - 1)}_m = 0 )^{-1}$ converge to $1$.
\end{proof}

\section{Proofs}
\label{sec: proofs}
\begin{proof}[Proof of Theorem \ref{thm: Kolmogrov and Yaglom theorem}(\ref{thm:kolmogorov})]
	Denote by $B_n^j$ the event that the Galton-Watson process 
    $(Z_n)_{n\geq 0}$ 
survives up to generation $n$, and the left-most particle in the $n$-th generation is a descendant of $j$th particle of the first generation.
	Write $q_n = P[Z_n = 0] = f^{(n)}(0)$ and $p_n = 1- q_n$ where $f$ is the probability generating function of the offspring distribution $\mu$.
	Then
\begin{equation}
\label{eq: iterate of conditional expectation}
\begin{split}
	&E[Z_n| Z_n>0]
	= \sum_{k=1}^\infty E[Z_n; Z_1=k| Z_n>0]
	= p_n^{-1} \sum_{k=1}^\infty E[Z_n; Z_1=k;Z_{n}>0]
	\\&\quad = p_n^{-1} \sum_{k=1}^\infty \sum_{j=1}^k E[Z_n; Z_1=k;B_n^j]
	= p_n^{-1} \sum_{k=1}^\infty \sum_{j=1}^k P[Z_1=k;B_n^j] E[Z_n| Z_1=k,B_n^j]
	\\&\quad = p_n^{-1} \sum_{k=1}^\infty \sum_{j=1}^k P[Z_1=k;B_n^j] \Big( E[Z_{n-1}| Z_{n-1}>0] +k-j\Big)
	\\&\quad = E[Z_{n-1}|Z_{n-1}> 0]  + \frac{p_{n-1}}{p_n}\sum_{k=1}^\infty \sum_{j=1}^k \mu(k) q_{n-1}^{j-1}(k-j).
\end{split}
\end{equation}
	The criticality implies that $q_n \uparrow 1$ while $n \to \infty$, and that
\[
	\frac{p_n}{p_{n-1}} = \frac{1- f^{(n)}(0)}{1-f^{(n-1)}(0)} = \frac{1- f(q_{n-1})}{1-q_{n-1}} \xrightarrow[n \to \infty]{} f'(1) = 1.
\]
	By the monotone convergence theorem,
\[
	\frac{p_{n-1}}{p_n} \sum_{k=1}^\infty \sum_{j=1}^k \mu(k) q_{n-1}^{j-1} (k-j)
	\xrightarrow[n \to \infty]{} \sum_{k=1}^\infty \sum_{j=1}^k \mu(k) (k-j)
	=  \sum_{k=1}^\infty \mu(k) k(k-1)/2
	= \frac{\sigma^2}{2}.
\]
	Now combining \eqref{eq: iterate of conditional expectation} with above, we get
\[\begin{split}
	&\frac{1}{n P(Z_n > 0)}
	= \frac{1}{n} E[Z_n | Z_n > 0]
	= \frac{1}{n}E[Z_0| Z_0 > 0] + \frac{1}{n} \sum_{m = 1}^n \frac{p_{m-1}}{p_m}\sum_{k=1}^\infty \sum_{j=1}^k \mu(k) q_{m-1}^{j-1}(k-j)
	\\&\quad \xrightarrow[n \to \infty]{} \frac{\sigma^2}{2}.
\end{split}\]
\end{proof}

	In order to compare distributions using their size-biased add-on structures, we need the following lemma:
\begin{lem}\label{lem: compare}
	Let $X_0$ and $X_1$ be two non-negative random variables with the same mean $a = E[X_0] = E[X_1] \in (0,\infty)$.
	Let $F_0$ be defined by $E[e^{-\lambda \dot X_0}] = E[e^{-\lambda X_0}] F_0(\lambda)$,
	where $\dot X_0$ is an $X_0$-transform of $X_0$,
	and $F_1$ be defined by $E[e^{-\lambda \dot X_1}] = E[e^{-\lambda X_1}] F_1(\lambda)$,
	where $\dot X_1$ is an $X_1$-transform of $X_1$.
	Then,
	\[
  \big| E[e^{-\lambda X_0}] - E[e^{-\lambda X_1}] \big|
\leq a \int_0^\lambda| F_0(s) - F_1(s) |ds, \quad \lambda \geq 0.
	\]
\end{lem}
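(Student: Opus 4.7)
The plan is to represent each Laplace transform $\phi_i(\lambda) := E[e^{-\lambda X_i}]$ as the exponential of the integral of $-a F_i$, and then exploit the fact that $t\mapsto e^{-t}$ is $1$-Lipschitz on $[0,\infty)$.

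First I would record the identity that relates the size-biased transform to the derivative of the Laplace transform. Since $X_i\ge 0$ with $E[X_i]=a$, differentiating under the integral gives $\phi_i'(\lambda) = -E[X_i e^{-\lambda X_i}]$, and by the very definition of the size-biased transform $E[X_i e^{-\lambda X_i}] = a\, E[e^{-\lambda \dot X_i}]$. Combining this with the hypothesis $E[e^{-\lambda \dot X_i}] = \phi_i(\lambda) F_i(\lambda)$ yields
\[
	\phi_i'(\lambda) = -a\, \phi_i(\lambda) F_i(\lambda), \qquad \phi_i(0)=1.
\]

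Next, dividing by $\phi_i(\lambda)>0$ and integrating from $0$ to $\lambda$ gives
\[
	-\ln \phi_i(\lambda) = a\int_0^\lambda F_i(s)\,ds, \qquad \text{i.e.,} \qquad \phi_i(\lambda) = \exp\Bigl(-a\int_0^\lambda F_i(s)\,ds\Bigr), \quad i = 0,1.
\]
This is essentially the relation \eqref{eq: Laplace exponent for size-biased add-on equation} reinterpreted, with $F_i(s)$ playing the role of $E[e^{-sA_i}]$ (no assumption on strict positivity is needed here, because we are not trying to identify $F_i$ as a Laplace transform of a probability distribution; we only use the ODE).

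Finally, since both integrals $a\int_0^\lambda F_i(s)\,ds$ are nonnegative, the mean value theorem, or equivalently the $1$-Lipschitz property $|e^{-x}-e^{-y}|\le |x-y|$ on $[0,\infty)$, gives
\[
	\bigl|\phi_0(\lambda)-\phi_1(\lambda)\bigr|
	\le \Bigl| a\int_0^\lambda F_0(s)\,ds - a\int_0^\lambda F_1(s)\,ds \Bigr|
	\le a\int_0^\lambda |F_0(s)-F_1(s)|\,ds,
\]
which is exactly the asserted bound.

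There is no real obstacle; the only thing to double-check is the differentiation under the integral sign (justified by dominated convergence, since $X_i e^{-\lambda X_i}$ is bounded by $X_i \in L^1$) and the positivity of $\phi_i$ needed to take logarithms, which is immediate from $X_i \ge 0$. The essence of the argument is that the size-biased add-on viewpoint turns $\phi_i$ into an exponential of an explicit integral of $F_i$, after which the comparison reduces to a one-line Lipschitz estimate.
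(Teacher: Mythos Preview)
Your proof is correct and follows essentially the same approach as the paper: both compute $\partial_\lambda(-\ln E[e^{-\lambda X_i}]) = aF_i(\lambda)$ from the size-biased definition, integrate, and then use a Lipschitz-type bound (the paper phrases it as ``$x-\ln x$ is decreasing on $[0,1]$'', you as ``$e^{-t}$ is $1$-Lipschitz on $[0,\infty)$'', which are equivalent). Your added remarks on differentiation under the integral and positivity of $\phi_i$ are fine and make the argument slightly more explicit.
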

\begin{proof}
	Since $\dot X_0$ is an $X_0$-transform of $X_0$,
	we have
	\[
	\partial_\lambda ( -\ln E[e^{-\lambda X_0}]) = \frac { E[X_0 e^{-\lambda X_0}]}{ E[e^{-\lambda X_0}] }
	= \frac{a E[e^{-\lambda \dot X_0}]}{E[e^{-\lambda X_0}]}
	= a F_0(\lambda).
	\]
	Similarly, $\partial_\lambda ( -\ln E[e^{-\lambda X_1}]) = a F_1(\lambda)$.
	Therefore,
	since $x - \ln x$ is decreasing on $[0,1]$,
	\[\begin{split}
    \big| E[e^{-\lambda X_0}] - E[e^{-\lambda X_1}] \big|
	&\leq \big| \ln E[e^{-\lambda X_0}] - \ln E[e^{-\lambda X_1}] \big|
	= a\big| \int_0^\lambda F_0(s)ds - \int_0^\lambda F_1(s)ds \big|
	\\& \leq a \int_0^\lambda| F_0(s) - F_1(s) |ds
	\end{split}\]
	as desired.
\end{proof}

   We are now ready to prove Lemma \ref{lem: our equation}.
	It is elementary to verify that if $Y$ is exponentially distributed, then it satisfies \eqref{eq: x2 type size-biased equation for exponential distribution}.
	So we 
	only need to show
	that if $Y$ is a strictly positive random variable with finite second moment, then \eqref{eq: x2 type size-biased equation for exponential distribution} implies that it is exponentially distributed.
	The following lemma will be used to prove this.

\begin{lem}\label{lem: zero inequality}
	Suppose that $c>0$ is a constant, and $F$  is a non-negative bounded function on $[0,\infty)$ satisfying that, for any $\lambda\geq 0$,
\begin{equation}\label{eq: zero inequality}
	F(\lambda)
	\leq
	\frac{1}{c}\int_0^1du
	\int_0^\lambda F(us)ds.
\end{equation}
	Then $F\equiv 0$.
\end{lem}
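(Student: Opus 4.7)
The plan is to iterate the integral inequality \eqref{eq: zero inequality} to bootstrap the uniform bound on $F$ into a sequence of polynomial bounds whose coefficients decay super-exponentially in the degree. Set $M := \sup_{\lambda \geq 0} F(\lambda)$, which is finite by hypothesis and serves as the base estimate $F(\lambda) \leq M$ for all $\lambda \geq 0$.

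The key observation is that if one has already established a bound of the form $F(\lambda) \leq A \lambda^n$ valid for every $\lambda \geq 0$, then substituting this into the right-hand side of \eqref{eq: zero inequality} and separating the two variables gives
$$F(\lambda) \leq \frac{A}{c} \int_0^1 u^n\, du \int_0^\lambda s^n\, ds = \frac{A \lambda^{n+1}}{c(n+1)^2}.$$
Starting from $A = M$, $n = 0$ and iterating this bootstrap, a straightforward induction on $n$ yields
$$F(\lambda) \leq \frac{M \lambda^n}{c^n (n!)^2} \qquad \text{for every } n \geq 0,\ \lambda \geq 0.$$
The factor of $u$ in the integrand (which traces back to the uniform variable $U$ in the distributional equation characterising the exponential) is essential here: it contributes an extra $1/(n+1)$ at each iteration, producing $(n!)^2$ in the denominator instead of merely $n!$.

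Since for every fixed $\lambda \geq 0$ the right-hand side tends to $0$ as $n \to \infty$, we conclude $F(\lambda) = 0$ for every $\lambda \geq 0$, so $F \equiv 0$. I do not anticipate any serious obstacle in this argument; the only point requiring care is the bookkeeping of the two nested integrations, so as to land on $(n!)^2$ in the denominator rather than only $n!$, as this is what guarantees pointwise convergence of the upper bound to zero regardless of how large $\lambda$ may be.
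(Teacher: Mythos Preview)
Your argument is correct. Both your proof and the paper's proceed by iterating the inequality, but the packaging differs. The paper argues by contradiction: it sets $\rho=\inf\{x:F(x)\neq 0\}$, uses that $F$ vanishes on $[0,\rho)$ to reduce the $s$-integral to $[\rho,\rho+\lambda]$, and then iterates crudely (bounding the integrand by a constant each time) to obtain $F(\rho+\lambda)\le(\lambda/c)^m$. This only forces $F=0$ on the window $[\rho,\rho+c)$, whence the contradiction. You instead iterate globally and exploit the full structure of the double integral, so that each pass gains a factor $1/(n+1)^2$; the resulting bound $F(\lambda)\le M\lambda^n/\bigl(c^n(n!)^2\bigr)$ dies for every $\lambda$ at once, and no contradiction or localisation is needed. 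Your route is a bit cleaner; the paper's route is more bare-handed but requires the extra infimum device.

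One small expository point: your closing remark that the square on $n!$ is ``what guarantees pointwise convergence of the upper bound to zero regardless of how large $\lambda$ may be'' overstates things. A single $n!$ in the denominator would already send $\lambda^n/(c^n n!)\to 0$ for every fixed $\lambda$, since these are the terms of the convergent series $e^{\lambda/c}$. The extra factor you extract from the $u$-integral is pleasant but not essential for the conclusion.
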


\begin{proof}
	By dividing both sides of \eqref{eq: zero inequality} by $\|F\|_\infty$, without loss of any generality, we can assume $F$ is bounded by $1$.
    We prove this lemma by  contradiction.
	Assume that
\begin{equation}
\label{eq: contradiction}
	\rho
	:= \inf\{x \geq 0: F(x) \neq 0\}
	< \infty,
\end{equation}
with the convention $\inf \emptyset=\infty$.
	Then, for each $\lambda > 0$,
\[
	F(\rho + \lambda)
	= \frac{1}{c} \int_0^1 du \int_0^{\rho +\lambda} F(us) ds
	= \frac{1}{c} \int_0^1 du \int_\rho^{\rho+\lambda} F(us) ds \leq \frac{\lambda}{c}.
\]
	Using this new upper bound, we have
\[
	F(\rho + \lambda)
	= \frac{1}{c} \int_0^1 du \int_\rho^{\rho+\lambda} F(us) ds
	\leq \frac{1}{c} \int_0^1 du \int_\rho^{\rho+\lambda} \frac{\lambda}{c}ds
	\leq \frac{\lambda^2}{c^2}.
\]
	Repeating this process, we have $F(\rho + \lambda) \leq \frac{\lambda^m}{c^m}$ for each $m \in \mathbb N$, which implies that $F = 0$ on $[\rho, \rho + c)$.
	This, however, contradicts \eqref{eq: contradiction}.
\end{proof}

\begin{proof}[Proof of Lemma \ref{lem: our equation}]
	Suppose that $Y$ is a strictly positive random variable with finite second moment, and \eqref{eq: x2 type size-biased equation for exponential distribution} is true.
	Define
$
	a
	:= E[\dot Y] \in (0,\infty)
$.
 	Consider an exponential random variable $\mathbf e$ with mean $a/2$.
	It is elementary to verify that
	$\mathbf e$ satisfies
	\eqref{eq: x2 type size-biased equation for exponential distribution}, in the sense that
$
	\ddot {\mathbf e} \overset{d} = \dot {\mathbf e}+U\dot {\mathbf e}',
$
	where $\dot {\mathbf e}$ and $\dot {\mathbf e}'$ are
both $\mathbf e$-transforms of $\mathbf e$,
	$\ddot {\mathbf e}$ is an $\mathbf e^2$-transform of $\mathbf e$,
	$U$ is a uniform random variable on $[0,1]$, and $\dot {\mathbf e}$, $\dot {\mathbf e}'$,
	$\ddot {\mathbf e}$ and $U$ are independent.
	Notice that $E[\dot {\mathbf e}] = a$, therefore we can compare the distribution of $\dot Y$ 
	with that of $\dot {\mathbf e}$
	using Lemma \ref{lem: compare}.
	This gives that
\[
	\big|E[ e^{-\lambda \dot Y}] - E[ e^{-\lambda \dot {\mathbf e}}] \big|
	\leq  a  \int_0^\lambda \int_0^1 \big| E [e^{-s u \dot Y}] - E [e^{-s u \dot {\mathbf e}}] \big| du ds,
	\quad \lambda \geq 0,
\]
	which, according to Lemma \ref{lem: zero inequality}, says that
	$\dot Y \overset{d} = \dot {\mathbf e}$.
	Since $Y$ and $\mathbf e$ are strictly positive, according to \eqref{eq: conditional and size-biased transform}, we have
\[
	E[1-e^{-\lambda Y}]/ E[Y] = E[1-e^{-\lambda \mathbf e}]/ E[\mathbf e], \quad \lambda \geq 0.
\]
	Letting $\lambda \to \infty$, we get $E[Y] = E[\mathbf e]$. Therefore, $Y \overset{d} = \mathbf e$ as desired.
\end{proof}

\begin{proof}[Proof of Theorem \ref{thm: Kolmogrov and Yaglom theorem}\eqref{thm:yaglom}.]
	Consider an exponential random variable $Y$ with mean $\sigma^2/2$.
	Let $\dot Y$ be a $Y$-transform of $Y$.
	As in Section \ref{sec: Methods}, 
    we only need to prove that
    $\dot Z_n/n$ converge weakly to $\dot Y$.
	From Proposition \ref{prop: size-biased add-on of size-biased tree }, we know that
	\[
	E [ e^{- \lambda \ddot Z_n^{(n)}} ]
		= E [e^{-\lambda \dot Z_n}] E[g(\lambda, \floor{Un})e^{-\lambda \dot Z_{\floor{Un}}}],
	\]
	where $U$ is a uniform  random variable on $[0,1]$ independent of $\{\dot Z_m: 0\le m\le n\}$;
	and $g(\lambda, m)$ is a function on $[0,\infty) \times \mathbb N_0$ such that
	$g(\lambda, m) \to 1$, uniformly in $\lambda$ as $m\to \infty$.
	After a renormalization, we have that
\begin{equation}\label{eq: renormalized size-biased add-on equation}
	E [ e^{- \lambda \frac{\ddot Z_n^{(n)}-1}{n}} ]
	= E [e^{-\lambda \frac{\dot Z_n - 1}{n}}] E\big[g\big(\frac{\lambda}{n}, \floor{Un} \big)e^{-\lambda U \frac{\dot Z_{\floor{Un}}}{Un} }\big],
	\quad \lambda \geq 0.
\end{equation}
	According to Theorem \ref{thm: change of measure}, one can verify that
	$(\ddot Z_n^{(n)} - 1)/n$ is a $(\dot Z_n - 1)/n$ transform of $(\dot Z_n - 1)/n$.
	Therefore, the above equation can be viewed as the size-biased add-on structure for the random variable $(\dot Z_n - 1)/n$.
	It is easy to see that the mean of $\dot Y$ is $\sigma^2$.
	According to \eqref{eq: second moment}, the mean of $(\dot Z_n - 1)/n$ is also $\sigma^2$.
Then comparing the distribution of $(\dot Z_n - 1)/n$ with that of $\dot Y$, and  using Lemma \ref{lem: compare}, we get that
\[
	\big| E[e^{-\lambda \frac{\dot Z_n - 1}{n}}] - E[e^{-\lambda \dot Y}]\big|
	 \leq \sigma^2 \int_0^\lambda ds \int_0^1 \big| g(\frac{s}{n}, \floor{un}) E[e^{-su \frac { \dot Z_{\floor{un}} } {un} }] - E[e^{- su \dot Y}]\big| du.
\]
Taking $n\to \infty$ and using the reverse Fatou's lemma, we arrive at
\[
	M(\lambda)
	\leq \sigma^2 \int_0^1du \int_0^\lambda M(us)ds,
	\quad \lambda\geq 0,
\]
	where
$M(\lambda) := \limsup_{n\to \infty} | E[ e^{- \lambda \frac{\dot Z_n }{n}}] - E[e^{-\lambda \dot Y}]|$.
	Thus by Lemma \ref{lem: zero inequality}, we have $M \equiv 0$, which says that $\dot Z_n/n$ converges weakly to $\dot Y$.
\end{proof}

\noindent
{\bf Acknowledgements:}  We thank the referee for helpful comments on the first version of this paper.
\vspace{.1in}

\end{document}